\theoremstyle{plain}
\newtheorem{theorem}{Theorem}[section]
\theoremstyle{definition}
\newtheorem{remark}[theorem]{Remark}
\newcommand{\ee}{\mathrm{e}}
\newcommand{\ii}{\mathrm{i}}
\newcommand{\dd}{\,\mathrm{d}}
\begin{document}

\title[A sequential view of self--similar measures]
{A sequential view of self--similar measures,\\
\vskip .1cm
\emph{or}, What the ghosts of Mahler and Cantor\\
\vskip .1cm
can teach us about dimension} 
% with super-polynomial scaling properties}

\author[M. Coons]{Michael Coons}
\author[J. Evans]{James Evans}

\address{School of Mathematical and Phys.~Sciences, University of
  Newcastle, \newline \hspace*{\parindent}University Drive, Callaghan
  NSW 2308, Australia}
\email{michael.coons@newcastle.edu.au,james.evans10@uon.edu.au}

% \date{\today}

\subjclass[2010]{Primary 28A80 Secondary 11K55, 52C23}
\keywords{Hausdorff dimension, iterated function systems, self-affine sets, automatic sequences, Mahler functions}
\thanks{We acknowledge the support of the Commonwealth of Australia.}

\maketitle

\vspace{.5cm}
%%%%%%%%%%%%%%%%%%%%%%%%%%%%%%%%%%%%%%%%%%%%%%%%
\begin{abstract} We show that missing $q$-ary digit sets $F\subseteq[0,1]$ have corresponding naturally associated countable binary $q$-automatic sequence $f$. Using this correspondence, we show that the Hausdorff dimension of $F$ is equal to the base-$q$ logarithm of the Mahler eigenvalue of $f$. In addition, we demonstrate that the standard mass distribution $\nu_F$ supported on $F$ is equal to the ghost measure $\mu_f$ of $f$. 
\end{abstract}
%%%%%%%%%%%%%%%%%%%%%%%%%%%%%%%%%%%%%%%%%%%%%%%%
\vspace{.8cm}

%%%%%%%%%%%%%%%%%%%%%%%%%%%%%
\section{Introduction}
%%%%%%%%%%%%%%%%%%%%%%%%%%%%%

This story starts with an uncountable set and a countable sequence, their respective dimension and asymptotical behaviour, and describes the setting in which these objects and properties---and their generalisations---coalesce.%\footnote{Coalesce (verb): come together to form one mass.}. 

The uncountable set is the ubiquitous standard middle-thirds Cantor set, $C$, the self-affine set that is the unique attractor of the iterated function system $\mathcal{S}_C:=\{S_1,S_2\}$, where the functions $S_1,S_2:[0,1]\to[0,1]$ are the affine contractions $$S_1(x)=\frac{x}{3}\qquad\mbox{and}\qquad S_2(x)=\frac{2+x}{3}.$$ The standard construction of $C$ is via the production of ``level sets'' $E_k$, for $k\geqslant 0$, where $E_k:=S_C^k([0,1])$, the function $S_C$ being defined by $S_C(E)=S_1(E)\cup S_2(E)$ for any set $E$, and $S_C^k$ denoting $k$-fold composition of $S_C$ with itself; see Figure \ref{cantf}. Each level set $E_k$ has Lebesgue measure $\lambda(E_k)=(2/3)^k$, so that $C$ has $\lambda$-measure zero. Nonetheless, $C$ has some size; it has Hausdorff dimension $\log_3(2)$, where $\log_3(\cdot)$ denotes the ternary logarithm. Further, one can construct a natural mass distribution $\nu_C$, called the Cantor measure, supported on $C$ by repeated subdivision, at each level spreading the mass equally among the subintervals of $E_k$ and taking $k$ to infinity. In this way, the $\lambda$-singular continuous measure $\nu_C$ is the (weak) limit of $\lambda$-absolutely continuous measures. We thus arrive to the paradigmatic example of a fractal set $C$ and a related mass distribution $\nu_C$. Much of the theories of fractal geometry and discrete dynamical systems have been developed pursuing generalisations of this example; see Falconer's definitive monograph {\em Fractal Geometry} \cite{Falconer} for details and discussions.

\newpage
\begin{figure}[ht]
\begin{center}
\begin{tikzpicture}
\foreach \order in {0,...,5,6}
\draw[yshift=-\order*17pt]  l-system[l-system={cantor set, axiom=F, order=\order, step=300pt/(3^\order)}];
\put(0,2){{\tiny $0$}}
\put(98,4){{\tiny $\frac{1}{3}$}}
\put(198,4){{\tiny $\frac{2}{3}$}}
\put(298,2){{\tiny $1$}}
\put(305,-3){{$E_0$}}
\put(305,-20){{$E_1$}}
\put(305,-37){{$E_2$}}
\put(305,-54){{$E_3$}}
\put(305,-71){{$E_4$}}
\put(307,-90){{$\vdots$}}
\put(305,-105){{$C$}}
\draw[white,line width=10pt] (0,-3) -- (10.55,-3);
\end{tikzpicture}
\end{center}
\vspace{-.1cm}
\caption{Iterated construction of the Cantor set $C$, where $E_k=S_C^k([0,1]).$}
\label{cantf}
\end{figure}

${}$
\vspace{-.8cm}

Transitioning from the uncountable to the countable, we consider the $3$-automatic binary sequence that is the infinite fixed point $\varrho_c^\infty(1)$ of the substitution $\varrho_{c}$ defined by
\begin{equation*}
   \varrho_{c}: \, \begin{cases} 1 \mapsto 101 \\ 
     0\mapsto 000\, . \end{cases}
\end{equation*} We define the {\em Cantor sequence} $\{c(n)\}_{n\geqslant 0}$ as the $n$th entry of $\varrho_c^\infty(1)$. The construction of  $\varrho_c^\infty(1)$ is reminiscent of the construction of the Cantor set $C$, the iterate $\varrho_c^k(1)$ playing the part of the level set $E_k$; see Figure \ref{fig:seq}. It is easy to see that $c(n)=1$ precisely when the ternary expansion of the integer $n$ contains no ones, so that analogy to the Cantor set is clear. See Allouche and Shallit \cite{ASbook} for details on automatic sequences.
\begin{figure}[h]
\vspace{-.3cm}
\begin{align*} 
\varrho_{\rm c}^0(1)&= 1\\
\varrho_{\rm c}^1(1)&= 101\\
\varrho_{\rm c}^2(1)&= 101000101\\
\varrho_{\rm c}^3(1)&= 101000101000000000101000101\\
%\varrho_{\rm c}^4(1)&= \mbox{\tiny 
%$1\ 0\ 1\ 0\ 0\ 0\ 1\ 0\ 1\ 0\ 0\ 0\ 0\ 0\ 0\ 0\ 0\ 0\ 1\ 0\ 1\ 0\ 0\ 0\ 1\ 0\ 1\ 0\ 0\ 0\ 0\ 0\ 0\ 0\ 0\ 0\ 0\ 0\ 0\ 0\ 0\ 0\ 0\ 0\ 0\ 0\ 0\ 0\ 0\ 0\ 0\ 0\ 0\ 0\ 1\ 0\ 1\ 0\ 0\ 0\ 1\ 0\ 1\ 0\ 0\ 0\ 0\ 0\ 0\ 0\ 0\ 0\ 1\ 0\ 1\ 0\ 0\ 0\ 1\ 0\ 1$}\\
&\vdots\\
\varrho_{\rm c}^\infty(1)&=  
101000101000000000101000101000000000000000000000000000101000\cdots
\end{align*}
\vspace{-.8cm}
\caption{Iterated construction of the Cantor sequence $c$}
\label{fig:seq}
\end{figure}
\vspace{-.2cm}

Using the definition, since $c$ is $3$-automatic, its generating function is a Mahler function, and one obtains immediately that the generating function of the Cantor sequence can be written as an infinite product, $$M_c(z):=\sum_{n\geqslant 0}c(n)z^n=\prod_{j\geqslant 0}\left(1+z^{2\cdot 3^j}\right).$$ A result of Bell and Coons \cite{BC2017} provides, as $z\to 1^-$, the asymptotics $$M_c(z)\asymp\frac{1}{(1-z)^{\log_3(2)}}(1+o(1)).$$ Here, the number $2$ in the quantity $\log_3(2)$ is the Mahler eigenvalue $\lambda_c$ of $M_c(z)$ (details and definitions are provided in the next section) obtained from the Mahler-type functional equation satisfied by $M_c(z)$, $$M_c(z)-(1+z^2)M_c(z^3)=0.$$ Note that the divergent behaviour of $M_c(z)$ is governed by the Hausdorff dimension of $C$; that is, $\dim_H C=\log_3(2)$. This relationship, generalised, is our first result.

\begin{theorem}\label{main1} Let $q\geqslant 2$ be an integer, $A\subseteq\{0,\ldots,q-1\}$ containing $0$ and $F$ be the subset of $[0,1]$ consisting of numbers that can be represented in base-$q$ using only $q$-ary digits form $A$. Then, to $F$ there is a naturally associated $q$-automatic binary sequence $f$ such that $$\dim_{\rm H}F=\log_q(|A|)=\log_q(\lambda_{f}),$$ where $\lambda_{f}$ is the Mahler eigenvalue of the generating function $M_f(z)$ of $f$ and $\log_q(\cdot)$ denotes the base-$q$ logarithm.
\end{theorem}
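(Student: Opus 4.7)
The natural associated sequence is $f=(f(n))_{n\geq 0}$ with $f(n)=1$ if every base-$q$ digit of $n$ lies in $A$ and $f(n)=0$ otherwise, generalising the Cantor sequence ($q=3$, $A=\{0,2\}$) discussed above. That $f$ is $q$-automatic is immediate from the explicit two-state DFAO with states ``alive'' and ``dead'': reading a digit $d\in A$ from ``alive'' keeps one in ``alive'', any other digit sends the automaton into the absorbing state ``dead'', with outputs $1$ and $0$ respectively. The plan is then to compute each of $\dim_{\rm H} F$, $\log_q|A|$, and $\log_q\lambda_f$ separately and show the three agree.

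For the Hausdorff dimension, observe that $F$ is the attractor of the iterated function system $\mathcal{S}_F=\{S_a:a\in A\}$ of affine contractions $S_a(x)=(x+a)/q$, each of ratio $1/q$. Since the open intervals $S_a((0,1))=(a/q,(a+1)/q)$ indexed by $a\in A\subseteq\{0,\ldots,q-1\}$ are pairwise disjoint subsets of $(0,1)$, the open set condition is satisfied with $U=(0,1)$, and Hutchinson's theorem gives $\dim_{\rm H} F$ as the unique $s$ solving $\sum_{a\in A}(1/q)^s=1$, namely $s=\log_q|A|$.

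For the Mahler eigenvalue, the hypothesis $0\in A$ ensures that every nonnegative integer has a unique base-$q$ expansion, with $f(n)=1$ exactly when all of its digits lie in $A$. Factoring coordinate-wise gives
\[
M_f(z)=\sum_{n\geq 0}f(n)z^n=\prod_{j\geq 0}P(z^{q^j}),\qquad P(z)=\sum_{a\in A}z^a,
\]
an infinite product generalising the one displayed for $M_c$. Telescoping one level yields the Mahler-type functional equation $M_f(z)=P(z)M_f(z^q)$, from which one reads off $\lambda_f=P(1)=|A|$. Chaining the three equalities gives the theorem.

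The only step requiring genuine care is verifying that the definition of Mahler eigenvalue developed in the next section does identify $\lambda_f$ with $P(1)$ for a function satisfying a first-order Mahler equation $M(z)=P(z)M(z^q)$ with nonnegative integer coefficients and $P(1)>0$; this is precisely the setting in which a Bell--Coons type asymptotic $M(z)\asymp(1-z)^{-\log_q\lambda}$ as $z\to 1^-$ applies, exactly as in the motivating Cantor case. Everything else is a direct specialisation of standard self-similar set and automatic sequence theory, and no obstacle beyond bookkeeping is anticipated.
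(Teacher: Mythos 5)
Your proposal is correct and follows essentially the same route as the paper: the same sequence $f$ (indicator of integers whose base-$q$ digits all lie in $A$), the same IFS $\{x\mapsto(x+a)/q\}_{a\in A}$ giving $\dim_{\rm H}F=\log_q|A|$ (the paper cites Falconer's Thm.~9.3, which is exactly the Hutchinson/open-set-condition computation you spell out), and the same infinite product $M_f(z)=\prod_{j\geqslant 0}P(z^{q^j})$ yielding $\lambda_f=P(1)=|A|$ via the Bell--Coons framework. The only cosmetic differences are that the paper realises $f$ as the fixed point of a substitution rather than via a DFAO, and note that the role of $0\in A$ is to make $P(0)=1$ so the infinite product is a well-defined power series (uniqueness of base-$q$ expansions holds regardless).
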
 

Returning to our motivating example, analogous to the Cantor measure $\nu_C$, we may construct a mass distribution $\mu_c$ associated to the sequence $c$, called the ghost\footnote{The term {\em ghost measure} was introduced by the second author \cite{EvansPRE}, inspired by the phrase ``the ghost of departed quantities'' as appearing in Berkeley's critique of calculus. Evans \cite{EvansPRE} writes, ``The terms of the sequence are (usually) much smaller than the sum of the terms, so the individual pure points of the $\mu_n$ are driven to zero by the averaging as $n$ tends to infinity. The measure $\mu$ is the ghost of the departed pure points of the $\mu_n$.''} measure. This construction was introduced by Baake and Coons \cite{BCaa}. Here, for each $n$, we take the sequence $c$ up to $3^n-1$ (a kind of `fundamental region') and reinterpret its (renormalised) values as the weights of a pure point probability measure $\mu_{c,n}$ on the torus $\mathbb{T}=[0,1)$ with support ${\rm supp}(\mu_{c,n})=\left\{\frac{j}{3^n}:0\leqslant j<3^n,\, c(j)=1\right\}.$ That is, $$\mu_{c,n}:=2^{-n}\sum_{j=0}^{3^n-1}c(j)\, \delta_{j/3^n},$$ where $\delta_x$ denotes the unit Dirac measure at $x$. The ghost measure $\mu_c$ is the (weak) limit of the sequence $(\mu_{c,n})_{n\geqslant 0}$ as $n\to\infty$. With the obvious generalisation in notation, we state our second result.

\begin{theorem}\label{main2} With the notation above and assumptions of Theorem \ref{main1}, the standard mass distribution $\nu_F$ supported on $F$ is equal to the ghost measure $\mu_f$.
\end{theorem}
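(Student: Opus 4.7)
The plan is to verify $\mu_f = \nu_F$ by first matching the two measures on the cylinder sets of the natural IFS construction of $F$, and then upgrading this level-by-level agreement to weak convergence against continuous test functions.

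For the set side, the IFS $\mathcal{S}_F = \{S_a(x) = (x+a)/q : a\in A\}$ generates $F$ with level sets $E_k = S_F^k([0,1])$, each a disjoint union of $|A|^k$ intervals of length $q^{-k}$ indexed by words $\mathbf{a} = a_1\cdots a_k \in A^k$ via
\[
I_\mathbf{a} = \left[\sum_{i=1}^k a_i q^{-i},\; \sum_{i=1}^k a_i q^{-i} + q^{-k}\right].
\]
The self-similarity of the standard mass distribution (uniform subdivision at each stage) yields $\nu_F(I_\mathbf{a}) = |A|^{-k}$ for every $\mathbf{a}\in A^k$.

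For the sequence side, the binary sequence $f$ furnished by Theorem \ref{main1} satisfies $f(n) = 1$ exactly when every base-$q$ digit of $n$ lies in $A$, so that $\#\{j<q^n : f(j) = 1\} = |A|^n$ and each $\mu_{f,n}$ is a probability measure. Given $\mathbf{a}\in A^k$ and $n\geqslant k$, the atoms of $\mu_{f,n}$ lying in the interior of $I_\mathbf{a}$ correspond to integers $j = \sum_{i=1}^k a_i q^{n-i} + m$ with $0 \leqslant m < q^{n-k}$ and $f(j) = 1$. Because the high-order digits $a_1,\ldots,a_k$ are already admissible, the condition $f(j)=1$ collapses to the requirement that the trailing block representing $m$ (padded to $n-k$ digits) has all digits in $A$, giving $|A|^{n-k}$ such values. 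Consequently $\mu_{f,n}(I_\mathbf{a}) = |A|^{-n} \cdot |A|^{n-k} = |A|^{-k}$, which matches $\nu_F(I_\mathbf{a})$ exactly.

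To upgrade this to weak convergence, given a continuous $g:[0,1]\to\RR$ and $\varepsilon > 0$, uniform continuity picks $k$ with $|g(x) - g(y)| < \varepsilon$ whenever $|x - y| < q^{-k}$. For any choice of samples $x_\mathbf{a} \in I_\mathbf{a}$ and any $n \geqslant k$, both integrals $\int g \, d\mu_{f,n}$ and $\int g \, d\nu_F$ differ from the Riemann-type sum $\sum_{\mathbf{a}\in A^k} g(x_\mathbf{a})\, |A|^{-k}$ by at most $\varepsilon$, since in each $I_\mathbf{a}$ of diameter $q^{-k}$ both measures carry the identical mass $|A|^{-k}$. Letting $n\to\infty$ and then $\varepsilon\to 0$ gives $\mu_{f,n} \to \nu_F$ weakly, which simultaneously establishes the existence of the ghost measure $\mu_f$ and the identity $\mu_f = \nu_F$.

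The main technical nuisance I anticipate is handling the shared endpoints between adjacent cylinders (they carry no $\nu_F$-mass, but occasionally appear as atoms of $\mu_{f,n}$) and confirming that the normalisation $|A|^{-n}$ coincides with the Mahler eigenvalue $\lambda_f^{-n}$ supplied by Theorem \ref{main1}; both are bookkeeping issues, and the substantive content of the argument is the exact digit-count identity $\mu_{f,n}(I_\mathbf{a}) = |A|^{-k}$ for all $n\geqslant k$.
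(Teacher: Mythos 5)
Your proof is correct, but it takes a genuinely different route from the paper. The paper proves $\nu_F=\mu_f$ by showing the Fourier--Stieltjes coefficients agree: it unwinds the self-similarity recursion $\nu_{F,k}=\frac{q}{m}\nu_{F,k-1}\big|_{E_k}$ to get $\widehat{\nu_F}(n)=\prod_{\ell\geqslant 1}\bigl(\frac{1}{m}p_f(\ee^{2\pi\ii n/q^\ell})\bigr)$, reads off the same infinite product for $\widehat{\mu_f}(n)$ from the polynomial identity $\sum_{j<q^k}f(j)z^j=\prod_{\ell=0}^{k-1}p_f(z^{q^\ell})$, and then invokes the uniqueness theorem for Fourier coefficients of measures on $\TT$. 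You instead work directly in physical space: the digit-counting identity $\mu_{f,n}(I_{\mathbf a})=|A|^{-k}=\nu_F(I_{\mathbf a})$ for every level-$k$ cylinder and every $n\geqslant k$, upgraded to weak convergence by uniform continuity and Riemann sums. Your key combinatorial input --- that $f(j)=1$ exactly when all base-$q$ digits of $j$ lie in $A$, so the admissible $j$ in a fixed cylinder are counted by the trailing digit block --- is the real-space counterpart of the paper's product formula, and both arguments are sound. What each buys: the paper's Fourier computation produces the explicit Strichartz-type infinite product for $\widehat{\nu_F}$, which ties directly into the Mahler functional equation that is the paper's central theme; your argument is more elementary and self-contained (no appeal to Rudin), gives the exact mass of every cylinder, and establishes the weak convergence of the $\mu_{f,n}$ en route rather than assuming the ghost measure exists. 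Your two flagged technicalities are genuine but harmless: the shared-endpoint atoms contribute total mass at most $|A|^{k-n}\to 0$ for fixed $k$, and $\nu_F$ is non-atomic for $m\geqslant 2$ so Portmanteau applies to the cylinders; and the normalisation $|A|^{-n}=m^{-n}=\lambda_f^{-n}$ is immediate from $\lambda_f=p_f(1)=m$.
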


\noindent In particular, the Cantor measure $\nu_C$ is equal to the ghost measure $\mu_c$. In this way, Theorem~\ref{main2} provides an alternative construction of standard mass distributions supported on fractals, and does so in a way that takes the countable to the uncountable.

\begin{remark} Self-similar sets---attractors of affine contractions---have been used by many authors to model physical phenomena; see Mandelbrot \cite{M1977} for a detailed history and references. The study of self-similar sets was put into a rigorous general framework by Hutchinson \cite{H1981} and the study of self-similar measures and their Fourier transforms was studied in detail by Strichartz \cite{S1990,S1993}; see also Makarov \cite{M1993,M1994}. See Baake and Moody \cite{BM2000} for the generalisation to compact families of contractions.
\end{remark}

%%%%%%%%%%%%%%%%%%%%%%%%%%%%%
\section{Hausdorff dimension and the Mahler eigenvalue}
%%%%%%%%%%%%%%%%%%%%%%%%%%%%%

In this section, we prove Theorem \ref{main1}. 

To this end, Let $q\geqslant 2$ be an integer, $A\subseteq\{0,\ldots,q-1\}$ containing $0$ and $F$ be the subset of $[0,1]$ consisting of numbers that can be represented in base-$q$ using only $q$-ary digits form $A$. We enumerate the set $A$ as $$0=a_1<a_2<\cdots<a_{m-1}<a_m\leqslant q-1.$$ Then $F$ is the unique attractor of the iterated function system $\mathcal{S}=\{S_1,\ldots ,S_m\}$, where the functions $S_i$ are given by \begin{equation}\label{S}S_i(x)=\frac{x+a_i}{q}.\end{equation} Moreover, we have \cite[Thm.~9.3]{Falconer} \begin{equation}\label{dimF} \dim_{\rm H}F=\log_q(m).\end{equation} We associate to $F$ the $q$-automatic binary sequence $f$, which is the infinite fixed point $\varrho_{f}^\infty(1)$ of the substitution $\varrho_{f}$ defined by
\begin{equation*}
   \varrho_{f}: \, \begin{cases} 1 \mapsto 10^{a_2-(a_1+1)}1\cdots 0^{a_k-(a_{k-1}+1)}1\cdots 0^{a_m-(a_{m-1}+1)}1 0^{q-1-(a_{m})}\\ 
     0\mapsto 0^q\, , \end{cases} 
\end{equation*} where $0^j$ indicates a string of zeros of length $j$ with the convention that the string $0^0$ is equal to the empty string. The power series generating function $M_f(z):=\sum_{n\geqslant 0} f(n)\, z^n $ of $f$ has an infinite product representation, easily read off from the substitution; indeed, $M_f(z)= \prod_{j\geqslant 0}p_f(z^{q^j}),$ where $$p_f(z)=\sum_{k=1}^{m}z^{a_k}=1+z^{a_2}+z^{a_3}+\cdots+z^{a_{m-1}}+z^{a_m},$$ and where we have used the fact that $a_1=0$. This product representation shows that $M_f(z)$ is a Mahler function. 

In general, for an integer $q\geqslant 2$, a power series $M(z)\in\mathbb{Z}[[z]]$ is called a $q$-\emph{Mahler function} (or a just a \emph{Mahler function} when $q$ is understood) provided there is an integer $d\geqslant 1$ and polynomials $p_0(z),\ldots,p_d(z)\in\mathbb{Z}[z]$ with $p_0(z)p_d(z)\neq 0$ such that $M(z)$ satisfies the functional equation \begin{equation}\label{MFE} p_0(z)M(z)+p_1(z)M(z^q)+\cdots+p_d(z)M(z^{q^d})=0.\end{equation} We call the integer $q$, the {\em base} of the Mahler function; the minimal integer $d$ for which such an equation exists is called the {\em degree} of $M(z)$. So in the case of the sequence $f$ above, the function $M_f(z)$ is a degree-one $q$-Mahler function. The leading asymptotic behaviour of Mahler functions, as $z\to 1^-$, is governed by a number called the Mahler eigenvalue. This concept was introduced by Bell and Coons \cite{BC2017} in order to produce a quick transcendence test for Mahler functions. Our focus here is on its connection to the Hausdorff dimension of a related fractal, the self-affine set $F$.

To formalise this notion, suppose that $M(z)$ satisfies \eqref{MFE}, set $p_i:=p_i(1)$, and form the characteristic polynomial  of $M(z)$, $$\chi_M(\lambda):=p_0\lambda^d+p_1\lambda^{d-1}+\cdots+p_{d-1}\lambda+p_d.$$ Bell and Coons \cite{BC2017} showed that if $\chi_M(\lambda)$ has $d$ distinct roots, then there exists an eigenvalue $\lambda_f$ with $\chi_M(\lambda_M)=0$, which is naturally associated to $M(z)$. We call $\lambda_M$ the {\em Mahler eigenvalue} of $M(z)$. This natural association is best seen through the asymptotics. For example, in the case of degree-one $q$-Mahler functions, if $M(z)=\prod_{j\geqslant 0}p(z^{q^j})$ for some polynomial $p(z)\in\mathbb{Z}[z]$ with $p(0)=1$ and $p(1)\neq 0$, we have that $\lambda_M=p(1)$ and, as $z\to 1^-$, \begin{equation}\label{fos}M(z)\asymp\frac{1}{(1-z)^{\log_q p(1)}}(1+o(1)).\end{equation}

With the above concepts and definitions in hand, it is immediate that $$\lambda_f:=\lambda_{M_f}=p_f(1)=m,$$ since there are $m$ terms in the polynomial $p_f(z)$ each with coefficient equal to one. The validity of Theorem \ref{main1} is now readily apparent by combining this equality with \eqref{dimF}.

\begin{remark} Note that \eqref{fos} implies that as $z\to 1^-$ the function $(1-z)^{\log_q p(1)}M(z)$ potentially oscillates between two positive real numbers. In fact, this is the case---see, e.g., Brent, Coons and Zudilin \cite{BCZ2016}, Bell and Coons \cite{BC2017} and Coons \cite{C2017}---though little is known about the consequences and properties of this oscillation. In the light of Theorem \ref{main1}, it seems a reasonable question to ask if this oscillation has a relation to certain fractals or their properties. 
\end{remark}

%%%%%%%%%%%%%%%%%%%%%%%%%%%%%
\section{Fractal mass distributions are ghost measures}
%%%%%%%%%%%%%%%%%%%%%%%%%%%%%

In this section, we start by describing both the standard mass distribution $\nu_F$ supported on the attractor $F$ of an iterated function system $\mathcal{S}$ of rational affine contractions and the ghost measure $\mu_f$ of the associated automatic sequence $f$. We then prove Theorem \ref{main2}, that $\nu_F=\mu_f$, by showing that the Fourier--Stieltjes coefficients $\widehat{\nu_F}(n)$ and $\widehat{\mu_f}(n)$ are equal for every integer $n$. Details and results showing this equivalence can be found in Rudin \cite[Thm.~1.3.6]{Rudin}; see also Baake and Grimm \cite[Ch.~8]{AO} for a more recent discussion on Fourier analysis and measures focussing on those important to the study of aperiodic order.

Let $\mathcal{S}=\{S_1,\ldots,S_m\}$ be an iterated function system satisfying \eqref{S} and define, for any set $E\subseteq [0,1]$, the function $$S(E):=\bigcup_{i=1}^m S_i(E).$$ In particular, we have that $F:=\lim_{k\to\infty}S^k([0,1])$, where, as before, $S^k$ indicates $k$-fold function composition of $S$ with itself. Analogous to the construction of the Cantor set in Figure \ref{fig:cant}, we let $E_k:=S^k([0,1])$ be the level sets in the construction of $F$. Each of the level sets $E_k$ is a union of $m^k$ intervals of length $q^{-k}$. We divide the unit mass of the interval equally among each of these intervals and construct a mass distribution $\nu_F$ supported on the attractor $F$ as the limit of the mass distributions $\nu_{F,k}$ supported on the level sets $E_k$. Initially, we have that $\nu_{F,0}=\lambda\, \big|_{[0,1]}$ is Lebesgue measure restricted to the interval $[0,1]$, and for $k\geqslant 1$, we have \begin{equation}\label{nuk}\nu_{F,k}=\frac{q}{m}\cdot\nu_{F,k-1}\, \big|_{E_k}.\end{equation} Each measure $\nu_{F,k}$ is absolutely continuous with respect to Lebesgue measure. The measure $\nu_F$ is the (weak) limit of the $\nu_{F,k}$ as $k\to\infty$. %Here, $\nu_F$ is supported on $F$ with $\nu_F(\mathbb{R})=\nu_F(F)=1$. 
For example, Figure \ref{fig:devil} contains the graph of the distribution function of $\nu_C$ over the interval $[0,1]$, known as the devil's staircase---there are uncountably many steps, one at each element of the Cantor set $C$.
\begin{figure}[htp]
\begin{center}
\includegraphics[width=4.7in,height=2.0in]{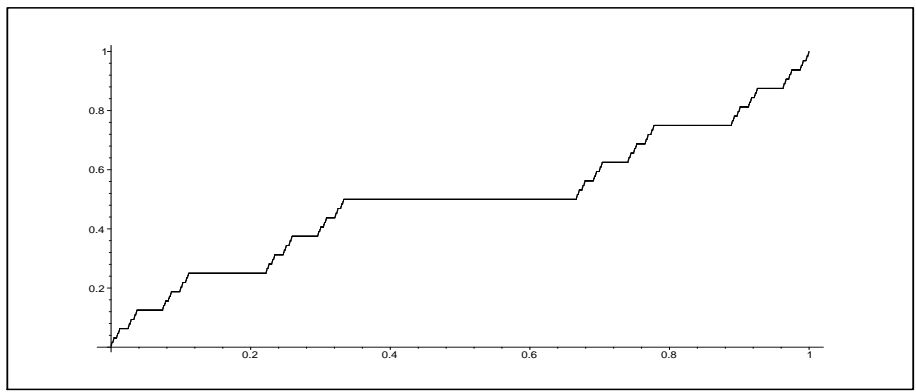}
\end{center}
%\vspace{-.3cm}
\caption{The devil's staircase: the distribution function of $\nu_C=\mu_c$.}
\label{fig:devil}
\end{figure}

Turning now to the ghost measure of the related sequence $f$, for each $k$, analogous to the construction of $\mu_c$ in the Introduction, we take the sequence $f$ up to $q^k-1$ and reinterpret its (renormalised) values as the weights of a pure point probability measure $\mu_{f,k}$ on the torus $\mathbb{T}=[0,1)$ with support ${\rm supp}(\mu_{f,k})=\{j/q^k:0\leqslant j<q^k,\, f(j)=1\}.$ That is, $$\mu_{f,k}:= m^{-k}\sum_{j=0}^{q^k-1}f(j)\, \delta_{j/q^k},$$ where $\delta_x$ denotes the unit Dirac measure at $x$. The measures $\mu_{f,k}$ are the mass distributions associated to the finite sequences $\varrho_f^k(1)$, analogous to the mass distributions $\nu_{F,k}$ associated to the level sets $E_k$. The ghost measure $\mu_f$ is the (weak) limit of the sequence $(\mu_{f,k})_{k\geqslant 0}$ as $k\to\infty$.

With this terminology in hand, we are now ready to prove Theorem \ref{main2}.

\begin{proof}[Proof of Theorem \ref{main2}] As stated at the beginning of this section, to show that $\nu_F=\mu_f$, it is enough (equivalent, actually) to show that their Fourier--Stieltjes coefficients are equal; that is, for every integer $n$, $\widehat{\nu_F}(n)=\widehat{\mu_f}(n)$. 

For $\nu_F$, we note that given an integer $n$, applying \eqref{nuk} followed by a change of variable, we have \begin{multline*} \widehat{\nu_{F,k}}(n) := \int_0^1 \ee^{2\pi \ii nx}\dd \nu_{F,k}(x)=\frac{q}{m}\int_{0}^1\ee^{2\pi \ii nx}\dd \nu_{F,k-1}(x)\,\big|_{E_k}\\
=\, \frac{1}{m}\sum_{j=1}^m \int_0^1 \ee^{2\pi \ii n\left(\frac{a_j+y}{q}\right)}\dd \nu_{F,k-1}(y)
=\, \frac{1}{m}\sum_{j=1}^m \ee^{2\pi \ii n a_j/q} \int_0^1 \ee^{2\pi \ii (n/q) y}\dd \nu_{F,k-1}(y)\\=\left(\frac{1}{m}\cdot p_f\left(\ee^{2\pi \ii n/q}\right)\right)\, \widehat{\nu_{F,k-1}}(n/q)
= \prod_{\ell=1}^k \left(\frac{1}{m}\cdot p_f\left(\ee^{2\pi \ii n/q^\ell}\right)\right)\cdot\int_0^1 \ee^{2\pi \ii (n/q^k)x}\dd x,
\end{multline*} where the last equality follows by repeated application of the penultimate equality, and the fact that $\nu_{F,0}(x)=\lambda\,\big|_{[0,1]}$. Taking the limit as $k\to\infty$, we thus have \begin{equation}\label{hatnu}\widehat{\nu_{F}}(n)=\prod_{\ell\geqslant 1} \left(\frac{1}{m}\cdot p_f\left(\ee^{2\pi \ii n/q^\ell}\right)\right),\end{equation} since $\lim_{k\to\infty}\int_0^1 \ee^{2\pi \ii (n/q^k)x}\dd x=1$ for any integer $n$.

The calculation for $\mu_f$ is more straight-forward. Indeed, by inspecting the iterates $\varrho_f^k(1)$, it is clear that $$\sum_{j=0}^{q^k-1}f(j)\, z^j=\prod_{\ell=0}^{k-1} p_f\left(z^{q^\ell}\right),$$
so that 
\begin{equation*}
\widehat{\mu_{f,k}}(n):= m^{-k}\sum_{j=0}^{q^k-1}f(j)\, \ee^{-2\pi\ii n j/q^k}=m^{-k}\prod_{\ell=0}^{k-1} p_f\left(\ee^{-2\pi\ii n q^{\ell-k}}\right)=\prod_{\ell=1}^{k}\left(\frac{1}{m}\cdot p_f\left(\ee^{-2\pi\ii n /q^{\ell}}\right)\right).
\end{equation*} Taking the limit as $k\to\infty$, $\widehat{\mu_f}(n)$ is equal to the righthand side of \eqref{hatnu} and so also to $\widehat{\nu_F}(n)$, thus $\mu_f=\nu_F$, which finishes the proof of the theorem.
\end{proof}

%%%%%%%%%%%%%%%%%%%%%%%%%%%%%
\section{Further comments}
%%%%%%%%%%%%%%%%%%%%%%%%%%%%%

The idea of fractals arising from automatic sequences is by no means new. The limiting sets of automatic sequences and their relation to fractals was investigated by Barb\'e and von Haeseler \cite{BvH2003}. In fact, even our paradigmatic example---the Cantor sequence---has been around for some time. It played an emphatic role in the paper of Allouche and Skordev \cite{AS2006}; one of the purposes of their paper was to remind the mathematical community that there is an abundance of literature connecting automatic sequences to fractals. See their paper \cite{AS2006} for a detailed reference list. There are even several examples given in Allouche and Shallit~\cite{ASbook}, e.g., the dragon curve is related to the regular paperfolding sequence \cite[p.~155]{ASbook} and one can obtain von Koch's famous snowflake via an $8$-automatic sequence on three letters \cite[p.~202]{ASbook}. As well, the notion of relating the Hausdorff dimension to properties of automatic fractals is not new; Adamczewski and Bell explored the possibility of a relationship between the Hausdorff dimension and entropy of an automatic fractal, which led them to ask several questions; see \cite{AB2011}.

In this paper, we demonstrated how two objects, attractors of certain rational affine iterated function systems and automatic sequences, can be related and explored a few of the consequences of that relationship. As described above, the connection between fractal and automatic sequences is well-established. Our contribution is an explicit relationship of some invariants related to either object. In particular, we showed that the Hausdorff dimension of the attractor is related to the asymptotics of the generating function of the automatic sequence. The class of iterated function systems we considered has attractors that are self-affine sets, and so the relationship we describe is with a very tangible, and well-understood, geometric object. That said, there is no reason not to desire a generalisation beyond this straight-forward class of examples. Ghost measures exist for a much larger class of sequences, e.g., for a large subset of regular sequences \cite{CMpre} as defined by Allouche and Shallit \cite{AS1992}. The ghost measures of the so-called `affine' $2$-regular sequences have been recently completely described by Evans \cite{EvansPRE}. The existence of such measures begs the question of further relationships between automatic and regular sequences to fractals. 

%%%%%%%%%%%%%%%%%%%%%%%%%%%%%%%%%%%%%%%%%%%%%%%%%
\bibliographystyle{plain}

%%%%%%%%%%%%%%%%%%%%%%%%%%%%%%%%%%%%%%%%%%%%%%%%%

%%%%%%%%%%%%%%%%%%%%%%%%%%%%%%%%%%%%%%%%%%%%%%%%%

\end{document}